\documentclass{gtmon_a}
\pdfoutput=1



\proceedingstitle{Proceedings of the Nishida Fest (Kinosaki 2003)}
\conferencestart{28 July 2003}
\conferenceend{8 August 2003}
\conferencename{International Conference in Homotopy Theory}
\conferencelocation{Kinosaki, Japan}

\editor{Matthew Ando}
\givenname{Matthew}
\surname{Ando}

\editor{Norihiko Minami}
\givenname{Norihiko}
\surname{Minami}

\editor{Jack Morava}
\givenname{Jack}
\surname{Morava}

\editor{W Stephen Wilson}
\givenname{W Stephen}
\surname{Wilson}

\title{Determination of the multiplicative nilpotency\\of 
self-homotopy sets}

\author{Ken-ichi Maruyama}
\givenname{Ken-ichi}
\surname{Maruyama}
\address{Faculty of Education\\
Chiba University\\\newline
Chiba 263-8522\\
Japan}
\email{maruyama@faculty.chiba-u.jp}
\urladdr{}


\volumenumber{10}
\issuenumber{}
\publicationyear{2007}
\papernumber{16}
\startpage{281}
\endpage{292}

\doi{}
\MR{}
\Zbl{}

\arxivreference{}

\keyword{self-homotopy sets}
\keyword{Lie groups}
\keyword{$H$--spaces}
\subject{primary}{msc2000}{55Q05}
\subject{primary}{msc2000}{55P10}
\subject{primary}{msc2000}{57T20}
\subject{secondary}{msc2000}{55P60}
\subject{secondary}{msc2000}{20D15}

\received{13 August 2004}
\revised{22 June 2005}
\accepted{}
\published{29 January 2007}
\publishedonline{29 January 2007}
\proposed{}
\seconded{}
\corresponding{}
\version{}


\makeatletter
\def\cnewtheorem#1[#2]#3{\newtheorem{#1}{#3}[section]
\expandafter\let\csname c@#1\endcsname\c@prop}


\let\xysavmatrix\xymatrix
\def\xymatrix{\disablesubscriptcorrection\xysavmatrix}
\AtBeginDocument{}


\newtheorem{prop}{Proposition}[section]
\cnewtheorem{lemma}[prop]{Lemma}
\cnewtheorem{theorem}[prop]{Theorem}
\cnewtheorem{corollary}[prop]{Corollary}
\cnewtheorem{conjecture}[prop]{Conjecture}

\theoremstyle{remark}
\cnewtheorem{example}[prop]{Example}
\cnewtheorem{definition}[prop]{Definition}
\newtheorem*{rem}{Remark}

\makeatother  

\numberwithin{equation}{section}


\begin{document}

\begin{abstract}
The semigroup of the homotopy classes of the self-homotopy maps of a
finite complex which induce the trivial homomorphism on homotopy groups is
nilpotent.  We determine the nilpotency of these semigroups of compact Lie
groups and finite Hopf spaces of rank 2.  We also study the nilpotency of
semigroups for Lie groups of higher rank. Especially, we give Lie groups
with the nilpotency of the semigroups arbitrarily large.
\end{abstract}

\maketitle

\section*{Introduction}

Let $[X, Y]$ denote the based homotopy classes of maps from $X$ to $Y$. 
When $X = Y$, 
the self-homotopy set $[X, X]$ is a monoid by the binary operation induced 
by 
composition of maps. In this
paper we deal with a subset  ${\mathcal Z}^n(X)$  which consists of 
elements
inducing the trivial homomorphism on homotopy groups in dimensions 
$\leq n$, 
where $n$ is a natural number or $\infty$. ${\mathcal Z}^n(X)$ is a  
multiplicative 
subset of $[X, X]$, though it has no unit element and is merely a 
semigroup in general.
With respect to this binary operation the (multiplicative) 
nilpotency is defined (see \fullref{sec:sec1}).   
For a finite complex $X$, it is known by Arkowitz--Maruyama--Stanley
\cite{ams} that ${\mathcal Z}^n(X)$
is nilpotent for a sufficiently large integer $n$ and the nilpotency is 
bounded above by other 
invariants such as the cone length or the killing length of $X$. On the 
other hand,
lower bounds for the nilpotency of ${\mathcal Z}^n(X)$ or more desirably 
the precise value 
of it have not been studied except for a few cases, see \cite{ams}.
Our purposes are to know the nilpotency for compact Lie groups or finite
$H$--spaces of low rank and to obtain lower bounds of the nilpotency for 
more
general cases. 
We will determine the nilpotency of ${\mathcal Z}^n(X)$ when $X$ is a  
finite 1--connected
$H$--space
of rank 2. We will also determine 
the nilpotency of ${\mathcal Z}^n(X)$ in the rank 3 cases where
$X =SU(4)$ or $Sp(3)$. Incidentally
the nilpotencies are equal to 2 in these cases. However this is not the 
case for
Lie groups of higher rank.  Actually, we will
give 
the lower bounds of the nilpotency for $SU(n)$ or $Sp(n)$ and show that 
the nilpotency
could be arbitrarily large for these spaces.

We briefly review the sections. In \fullref{sec:sec1} we recall some basic 
definition and 
the work of Arkowitz--Maruyama--Stanley
\cite{ams} and Maruyama \cite{ma}.
In \fullref{sec:sec2} we first compute the nilpotency of ${\mathcal Z}^*(X)$ 
when  $X$ is a 1--connected compact Lie group of rank 2,
then apply the result to the case where $X$ is a 1--connected finite 
$H$--space of rank 2.
In \fullref{sec:sec3} we find the nilpotency of $SU(4)$ and $Sp(3)$.
To this end we use S Oka's work on the structures of self-homotopy sets of
$SU(4)$ and $Sp(3)$. In \fullref{sec:sec4} we derive a property of the 
nilpotency of
the rationalization of an $H$--space which is the key to the proof of the 
theorem on the nilpotency
of $SU(n)$ and $Sp(n)$ mentioned above.

I would like to thank Martin Arkowitz for helpful comments. I am also 
grateful to the referee for many useful remarks.

The author was partially supported by Grant-in-Aid for Scientific Research 
(14540063),
Japan Society for the Promotion of Science.
\section{Preliminaries}\label{sec:sec1}

In this section we fix our notation and recall some results in \cite{ams}. 
For spaces $X$  and $Y$, let ${\mathcal Z}^n(X, Y)$ denote the subset of 
$[X, Y]$ consisting 
of all homotopy classes $\alpha \in [X, Y]$ such that $\alpha_* = 0 \co 
\pi_i(X) \to \pi_i(Y)$
for $i \leq n$.
If $n = \infty$ we write ${\mathcal Z}^\infty(X, Y)$. We also write  
${\mathcal Z}(X, Y)$
for ${\mathcal Z}^{\dim X}(X, Y)$ if $n = \dim X$.
Finally we write ${\mathcal Z}^n(X)$ for ${\mathcal Z}^n(X, X)$ and
${\mathcal Z}^\infty(X)$ for ${\mathcal Z}^\infty(X, X)$.

${\mathcal Z}^n(X)$ is a semigroup by the binary operation induced by 
composition of maps.
\begin{definition}
If there exists an integer $t \geq 1$ such that $a_1 \circ a_2 \circ 
\cdots \circ a_t = 0$
for all $a_1, a_2, \ldots, a_t \in {\mathcal Z}(X)$,
then ${\mathcal Z}(X)$ is called nilpotent.
The smallest such $t$ is called the nilpotency of ${\mathcal Z}(X)$ and 
written  $t(X)$.
Similarly we define the nilpotency of ${\mathcal Z}^\infty(X)$ and denote 
it by $t_\infty(X)$.
\end{definition}
Clearly $t_\infty(X) \leq t(X)$.  In \cite{ams} it is shown that if $X$ is 
a finite complex,
then  $t(X)$ and thus $t_\infty(X)$ are finite and the following 
inequalities allow us
to know about the upper bounds for the nilpotency. 
\begin{theorem}[Arkowitz, Maruyama and Stanley~\cite{ams}]
If $X$ is a 1--connected finite complex then,
$$t_\infty(X) \leq t(X) \leq {\rm kl}_s(X) \leq {\rm cl}_s(X)$$ 
where ${\rm kl}_s(X)$ is the spherical killing length of $X$ and ${\rm 
cl}_s(X)$
is the spherical cone length of $X$.
\end{theorem}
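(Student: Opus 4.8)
The plan is to treat the three inequalities separately, with the middle one carrying all the content. The leftmost inequality $t_\infty(X) \le t(X)$ is immediate: a map inducing the zero homomorphism on \emph{all} homotopy groups in particular induces zero in dimensions $\le \dim X$, so there is an inclusion of semigroups ${\mathcal Z}^\infty(X) \subseteq {\mathcal Z}(X)$, and a bound on the length of null composites in the larger semigroup is a fortiori a bound in the smaller one (this is the ``Clearly'' remark preceding the statement). It remains to prove $t(X) \le {\rm kl}_s(X)$ and ${\rm kl}_s(X) \le {\rm cl}_s(X)$, the latter being the standard comparison of these two length invariants.

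For the main inequality I would first record the two facts that drive everything. First, the \emph{sphere-killing property}: if $f \in {\mathcal Z}(X)$ and $h\co \bigvee_\alpha S^{n_\alpha} \to X$ is any map from a wedge of spheres with all $n_\alpha \le \dim X$, then $f \circ h \simeq *$; indeed $[\bigvee_\alpha S^{n_\alpha}, X] = \prod_\alpha \pi_{n_\alpha}(X)$, and $f \circ h$ is determined by the classes $f_*(h_\alpha) \in \pi_{n_\alpha}(X)$, all of which vanish because $f_* = 0$ on $\pi_i(X)$ for $i \le \dim X$. Second, the \emph{coexactness} of a cofiber sequence $A \to Y \xrightarrow{q} Y/A$: any $g\co Y \to X$ with $g|_A \simeq *$ factors up to homotopy as $g \simeq \bar g \circ q$. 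To exhibit the mechanism in its transparent (cone-length) form, suppose $* = X_0 \subset X_1 \subset \cdots \subset X_n = X$ realizes ${\rm cl}_s(X) = n$, so that each $X_k/X_{k-1} = \Sigma W_k$ is a wedge of spheres of dimension $\le \dim X$. Given $f_1, \ldots, f_n \in {\mathcal Z}(X)$, I claim by induction on $k$ that $(f_{n-k+1}\circ\cdots\circ f_n)|_{X_k} \simeq *$. For $k=1$ this is the sphere-killing property applied to $f_n$ on the wedge of spheres $X_1 = \Sigma W_1$. For the inductive step, the hypothesis gives $(f_{n-k+1}\circ\cdots\circ f_n)|_{X_k}\simeq *$, so by coexactness the restriction to $X_{k+1}$ factors through $X_{k+1}/X_k = \Sigma W_{k+1}$; precomposing $f_{n-k}$ with this factorization and invoking the sphere-killing property once more kills the whole composite on $X_{k+1}$. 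At $k=n$ this yields $f_1\circ\cdots\circ f_n \simeq *$, hence $t(X)\le n$.

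The sharper bound $t(X) \le {\rm kl}_s(X)$, and with it the full theorem, comes from running this same argument against the spherical killing tower rather than a cone decomposition --- the Eckmann--Hilton dual, in which cofibrations, quotients, and the coexactness factorization are replaced by fibrations, fibers, and the lifting property, with a dual sphere-detecting input controlling the composites at each stage. The main obstacle, and the step I would spend the most care on, is precisely this dualization: verifying that a map in ${\mathcal Z}(X)$ interacts with the fibration stages of the killing tower so as to lift one stage further at each composition, given that membership in ${\mathcal Z}(X)$ is a condition on homotopy groups rather than directly on the maps into the spherical fibers. Finally ${\rm kl}_s(X) \le {\rm cl}_s(X)$ is a formal comparison of the two invariants, obtained by converting a length-$n$ cone decomposition into a killing tower of the same length, and is independent of any self-maps.
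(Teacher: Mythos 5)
This theorem is stated in the paper as a quotation from Arkowitz--Maruyama--Stanley \cite{ams} with no proof given, so there is no in-paper argument to compare against; I am judging your proposal on its own terms. Your treatment of $t_\infty(X)\leq t(X)$ (inclusion of semigroups) and your induction over a spherical cone decomposition, using the sphere-killing property of elements of ${\mathcal Z}(X)$ together with coexactness, are correct and do establish $t(X)\leq {\rm cl}_s(X)$.

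The gap is the middle inequality $t(X)\leq {\rm kl}_s(X)$, which you defer to a ``dualization'' you never carry out, and the route you describe for it would not work because it misidentifies the invariant. The spherical killing length is not the Eckmann--Hilton dual of the cone length: it is itself a cofibration-based invariant, defined by a sequence of cofibrations $W_i\to X_i\to X_{i+1}$, $i=0,\dots,n-1$, with $X_0=X$, $X_n\simeq *$, and each $W_i$ a wedge of spheres of dimension $\leq \dim X$. No fibrations, fibers, or lifting properties enter. The correct argument is the extension-along-cofibrations counterpart of your restriction argument: show by induction that $f_k\circ\cdots\circ f_1\simeq h_k\circ q_k$, where $q_k\colon X\to X_k$ is the composite of the tower maps and $h_k\colon X_k\to X$; for the inductive step, the composite $W_k\to X_k\xrightarrow{h_k}X\xrightarrow{f_{k+1}}X$ is null by your sphere-killing property, so $f_{k+1}\circ h_k$ extends over $X_{k+1}$ by coexactness, and at $k=n$ the composite factors through a point. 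The final inequality ${\rm kl}_s(X)\leq {\rm cl}_s(X)$ then comes from converting a cone decomposition $*=Y_0\subset\cdots\subset Y_n=X$ into the killing decomposition $X\to X/Y_1\to\cdots\to X/Y_n=*$, whose attaching objects are the wedges $Y_{i+1}/Y_i$; your direct proof of $t(X)\leq {\rm cl}_s(X)$ is in effect the composite of these two steps, but the theorem asserts the two intermediate inequalities separately and your proposal establishes neither of them.
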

In this paper we deal with the spaces which have multiplications. When
$G$ is a group-like finite complex, $[G, G]$ is a nilpotent group 
(see Whitehead \cite{wh}) and 
${\mathcal Z}^n(G)$ is a subgroup. There exists the following   
naturality property of localization which will be used in the proofs of 
the results in 
the later sections.  
\begin{prop}[Maruyama~\cite{ma}] \label{localization} Let $G$ be a
group-like finite complex. Then 
with respect to the group structures induced from the multiplication of 
$G$,  
${\mathcal Z}(G)_p \cong {\mathcal Z}(G_p)$ and ${\mathcal Z}^\infty(G)_p 
\cong
{\mathcal Z}^\infty(G_p)$  
for any prime $p$. Here $X_p$ is the localization of $X$ at $p$. 
\end{prop}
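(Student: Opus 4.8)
The plan is to exhibit the desired isomorphism as the restriction of a single ``localize the map'' isomorphism $\Phi\co [G,G]_p \xrightarrow{\cong} [G_p,G_p]$, and then to realize both $\mathcal{Z}$--subgroups as kernels of one and the same homomorphism into an abelian group, so that exactness of localization finishes the argument. First I would record the two homotopy-theoretic inputs. Since $G$ is a group-like finite complex, $[G,G]$ is a nilpotent group under pointwise multiplication (Whitehead) and each $\pi_i(G)$ is finitely generated; the same holds for the $p$--local group-like space $G_p$. Write $\ell\co G \to G_p$ for the localization map. Because $G_p$ is $p$--local and nilpotent, the universal property of localization makes precomposition $\ell^*\co [G_p,G_p] \to [G,G_p]$ a bijection; and because $G$ is a finite complex while $G_p$ is nilpotent, the localization theorem for homotopy sets (Hilton--Mislin--Roitberg) identifies postcomposition $\ell_*\co [G,G] \to [G,G_p]$ with the $p$--localization $[G,G] \to [G,G]_p$ of nilpotent groups. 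Both maps respect the pointwise products, the $H$--structure on $G_p$ being the localization of that on $G$, so the composite $\Phi = (\ell^*)^{-1}\circ \ell_*$ is a group isomorphism $[G,G]_p \xrightarrow{\cong} [G_p,G_p]$ sending the class of $\alpha$ to its localization $\alpha_p$.

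Next I would cut out the $\mathcal{Z}$--subgroups as kernels. Put $d = \dim G$ and define $\Theta\co [G,G] \to \bigoplus_{i\le d}\mathrm{End}(\pi_i(G))$ by $\Theta(\alpha) = (\alpha_*)_{i\le d}$. Since the target $G$ is an $H$--space, the pointwise product of maps induces \emph{addition} on each $\pi_i(G)$, whence $(\alpha\beta)_* = \alpha_* + \beta_*$; thus $\Theta$ is a homomorphism into an abelian group and $\ker\Theta = \mathcal{Z}(G)$. The identical construction for $G_p$ yields $\widetilde{\Theta}\co [G_p,G_p] \to \bigoplus_{i\le d}\mathrm{End}(\pi_i(G_p))$ with $\ker\widetilde{\Theta} = \mathcal{Z}(G_p)$. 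Using $\pi_i(G_p) = \pi_i(G)_p$ together with naturality of $\ell$ on homotopy, i.e. $(\alpha_p)_* = (\alpha_*)_p$, and the identification $\mathrm{End}(\pi_i(G))_p \cong \mathrm{End}(\pi_i(G_p))$ (valid because $\pi_i(G)$ is finitely generated), I get a commuting square: $\widetilde{\Theta}\circ\Phi$ equals the localized homomorphism $(\Theta)_p$ up to this identification of targets.

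Finally I would invoke exactness. Localizing the short exact sequence $1 \to \mathcal{Z}(G) \to [G,G] \xrightarrow{\Theta} \mathrm{im}\,\Theta \to 1$ of nilpotent groups and using that $p$--localization is exact and preserves kernels gives $\mathcal{Z}(G)_p = \ker\big((\Theta)_p\big)$ inside $[G,G]_p$. The commuting square, together with the fact that $\Phi$ is an isomorphism, then forces $\Phi\big(\ker (\Theta)_p\big) = \ker\widetilde{\Theta}$, that is $\Phi(\mathcal{Z}(G)_p) = \mathcal{Z}(G_p)$. Hence $\Phi$ restricts to the asserted isomorphism $\mathcal{Z}(G)_p \cong \mathcal{Z}(G_p)$. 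The whole argument is uniform in the index range: taking $i \le d$ proves the first statement, and dropping the cutoff (all $i \ge 1$) proves the $\mathcal{Z}^\infty$ statement verbatim.

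The main obstacle is the Hilton--Mislin--Roitberg identification $\ell_*\co [G,G] \cong [G,G]_p$ of postcomposition with group localization: its hypotheses (finiteness of the source $G$, nilpotence of the target $G_p$ and of $[G,G]$, and compatibility of the localized $H$--structure) must be checked with care, and this is the genuine engine of the proof. A secondary bookkeeping point is that $\dim G_p$ need not equal $\dim G$, so the convention defining $\mathcal{Z}(G_p)$ must be pinned down; I sidestep this by fixing the common cutoff $n = \dim G$ in both $\Theta$ and $\widetilde{\Theta}$, which is legitimate since the condition defining $\mathcal{Z}$ constrains only $\pi_i$ with $i \le n$, and this is exactly the range on which $\ell$ induces localization of homotopy groups.
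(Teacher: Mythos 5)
This proposition is not proved in the paper at all: it is imported verbatim from \cite{ma}, so there is no in-text argument to compare yours against. Judged on its own terms, your proof of the first isomorphism ${\mathcal Z}(G)_p\cong{\mathcal Z}(G_p)$ is correct and rests on exactly the right inputs: the universal property of localization makes $\ell^*$ a bijection, the Hilton--Mislin--Roitberg theorem identifies $\ell_*\colon[G,G]\to[G,G_p]$ with the $p$--localization of the finitely generated nilpotent group $[G,G]$, and both maps are homomorphisms because $\ell$ is an $H$--map. Realizing ${\mathcal Z}^{\dim G}(G)$ as the kernel of $\Theta$ into the \emph{finite} direct sum $\bigoplus_{i\le \dim G}\mathrm{End}\,\pi_i(G)$ and using exactness of localization then works, since for finitely many finitely generated groups the comparison $\bigl(\bigoplus_{i\le d}\mathrm{End}\,\pi_i(G)\bigr)_p\to\bigoplus_{i\le d}\mathrm{End}\,\pi_i(G_p)$ is an isomorphism. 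Your remark about fixing the cutoff $n=\dim G$ on both sides is also the right convention.

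The one genuine gap is the closing claim that the ${\mathcal Z}^\infty$ case follows ``verbatim'' by dropping the cutoff. With all $i\ge 1$ the target of $\Theta$ must be the infinite product $\prod_i\mathrm{End}\,\pi_i(G)$ (the direct sum need not contain $\Theta(\alpha)$), and localization of abelian groups does not commute with infinite products. Concretely, your kernel argument gives ${\mathcal Z}^\infty(G)_p=\ker\Theta_p\subseteq\ker(\iota\circ\Theta_p)=\Phi^{-1}({\mathcal Z}^\infty(G_p))$, where $\iota\colon(\mathrm{im}\,\Theta)_p\to\prod_i\mathrm{End}\,\pi_i(G_p)$ is the comparison map; equality --- hence the inclusion ${\mathcal Z}^\infty(G_p)\subseteq\Phi({\mathcal Z}^\infty(G)_p)$, which is the direction the paper actually uses when it deduces nontriviality of ${\mathcal Z}^\infty(SU(4))$ from that of ${\mathcal Z}^\infty(SU(4)_3)$ --- requires $\iota$ to be injective. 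That amounts to showing that if $\alpha\in[G,G]$ induces on every $\pi_i(G)$ an endomorphism of finite order prime to $p$, then those orders are bounded independently of $i$, equivalently some $N\alpha$ with $N$ prime to $p$ already lies in ${\mathcal Z}^\infty(G)$. This boundedness is not automatic from finite generation of $\mathrm{im}\,\Theta$ (that reasoning is circular: torsion in $\mathrm{im}\,\Theta$ has bounded exponent, but you must first know $\Theta(\alpha)$ is torsion there), and it is precisely where the real content of the $\infty$--statement lies; it needs a separate argument, for instance one exploiting that $\pi_i(G)$ is finite for $i>\dim G$ together with some control of $\alpha_*$ in that range.
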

\section{The rank 2 case} \label{sec:sec2}
In this section we consider simply connected compact Lie groups of rank 2 
and related 
$H$--spaces.
\begin{theorem}\label{rank2}
Let $G$ be a 1--connected compact Lie group of rank 2. Then 
$$ t(G) = t_\infty(G) = 2.$$
\end{theorem}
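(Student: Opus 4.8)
The four simply connected compact Lie groups of rank $2$ are $SU(3)$, $Sp(2)$, $G_2$ and $SU(2)\times SU(2)\cong Spin(4)$, and I would treat them uniformly. Since $t_\infty(G)\le t(G)$ always holds, it suffices to prove the two inequalities $t_\infty(G)\ge2$ and $t(G)\le2$.

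For the lower bound I would exhibit a single nonzero element of $\mathcal Z^\infty(G)$. Let $\bar\Delta\co G\to G\wedge G$ be the reduced diagonal and $\bar\mu\co G\wedge G\to G$ the reduced multiplication determined by the $H$--structure (it exists because $\mu$ restricts to the fold map on $G\vee G$), and put $\theta=\bar\mu\circ\bar\Delta$. The reduced diagonal is trivial on homotopy: for any $\alpha\co S^i\to G$, naturality gives $\bar\Delta\circ\alpha=(\alpha\wedge\alpha)\circ\bar\Delta_{S^i}$, and $\bar\Delta_{S^i}\co S^i\to S^i\wedge S^i\simeq S^{2i}$ is nullhomotopic for $i\ge1$. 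Hence $\theta_*=0$ on every $\pi_i$, so $\theta\in\mathcal Z^\infty(G)$. To see $\theta\not\simeq*$ I would pass to rational cohomology: writing $H^*(G;\mathbb Q)=\Lambda(x,y)$ for the two primitive generators, both of odd degree, $\bar\mu^*$ is the reduced coproduct and $\bar\Delta^*$ is the cup product, so that $\theta^*(x)=\theta^*(y)=0$ while $\theta^*(xy)=xy-yx=2xy\neq0$. Thus $\theta$ is rationally essential and $t_\infty(G)\ge2$. This argument is uniform and uses only that $G$ has rank at least $2$, so that a nonzero product $xy$ exists.

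For the upper bound I would apply the Arkowitz--Maruyama--Stanley inequalities $t(G)\le{\rm kl}_s(G)\le{\rm cl}_s(G)$ recalled above and check that ${\rm cl}_s(G)=2$. Each of these groups is rationally a product of two odd spheres and admits a two-stage spherical cone decomposition reflecting this; for $SU(3)$, $Sp(2)$ and $SU(2)\times SU(2)$ this follows directly from the standard bundle and cell structures, giving ${\rm cl}_s(G)=2$ and hence $t(G)\le2$.

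The step I expect to be the main obstacle is the upper bound for $G_2$: since $H^*(G_2;\mathbb Z)$ carries $2$--torsion, its integral cell structure is larger than the four-cell rational model, so producing an honest length-$2$ spherical cone decomposition (rather than a naive longer one) requires genuine care. If that is awkward, the fallback is to prove $t(G_2)\le2$ directly: given $a,b\in\mathcal Z(G_2)$, both trivial on all homotopy groups through $\dim G_2$, one shows $a\circ b\simeq*$ by obstruction theory, compressing the composite past the relevant skeleta. The lower bound, by contrast, needs no case analysis, and combining the two inequalities with $t_\infty(G)\le t(G)$ yields $t(G)=t_\infty(G)=2$.
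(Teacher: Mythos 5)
The lower bound is where your proposal breaks down, and it breaks down irreparably. The map $\bar\mu\co G\wedge G\to G$ you want does not exist: for $\mu$ to factor through the smash product $G\wedge G=(G\times G)/(G\vee G)$ you would need $\mu|_{G\vee G}$ to be \emph{null}homotopic, whereas it is the fold map, which is essential -- you have the criterion exactly backwards. (The commutator map does factor through $G\wedge G$, giving the Samelson product $\langle 1,1\rangle$, but its composite with $\bar\Delta$ classifies $g\mapsto [g,g]=e$ and is null.) Your cohomology computation already signals the contradiction: $\theta^*$ would be a ring homomorphism, so $\theta^*(x)=\theta^*(y)=0$ forces $\theta^*(xy)=\theta^*(x)\theta^*(y)=0$, incompatible with the claimed $\theta^*(xy)=2xy\neq 0$. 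Worse, no rational argument of any kind can produce the lower bound at rank $2$: in $\Lambda_{\mathbb Q}(x,y)$ the only decomposable is $xy$, so every $f\in\mathcal Z(G)$ satisfies $f^*(x)=f^*(y)=0$ and hence $f^*(xy)=0$; thus $\mathcal Z(G_0)=0$ and $\mathcal Z^\infty(G)$ is a finite torsion group. This is precisely why the paper's rational lower-bound machinery (\fullref{nilpropo}) is deployed only for classical groups of large rank, where \emph{iterated} decomposables exist. The actual nontrivial elements are torsion phenomena, produced case by case: $\mathcal Z^\infty(S^3\times S^3)\cong\pi_6(S^3)\oplus\pi_6(S^3)\cong\mathbb Z_{12}\oplus\mathbb Z_{12}$ via the projection $q\co S^3\times S^3\to S^3\wedge S^3\simeq S^6$; Maruyama's computations $\mathcal Z^\infty(SU(3))\cong\mathbb Z_{12}$ and $\mathcal Z^\infty(Sp(2))\cong\mathbb Z_{120}$ as images of the pinch map to the top cell; and for $G_2$ an order-two commutator $[1,\alpha]\in\mathcal Z^\infty(G_2)$.

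The upper bound is also not established as written. For $S^3\times S^3$ the cone-length route is fine, but for $SU(3)$ and $Sp(2)$ the claim $\mathrm{cl}_s(G)=2$ amounts to compressing the attaching map of the top cell to the bottom sphere, which does not ``follow directly from the standard bundle and cell structures,'' and you concede you have no argument for $G_2$ beyond an unexecuted obstruction-theory sketch. The paper avoids cone length entirely for the upper bound: it identifies $\mathcal Z(G)$ explicitly -- as $\mathrm{Im}\,q_G^*$ for $G=SU(3),Sp(2)$, and as the subgroup generated by $\mathrm{Im}\,q_{G_2}^*$ and $[1,\alpha]$ for $G_2$ -- and then checks that any two such elements compose trivially, the key observation being that the middle composite $S^{\dim G}\xrightarrow{x}G\xrightarrow{q_G}S^{\dim G}$ is a finite-order element of $[S^{\dim G},S^{\dim G}]\cong\mathbb Z$ and hence zero. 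To repair your proof you would need to replace the lower-bound construction entirely with such torsion computations and either prove the cone-length claims or carry out the explicit composition argument; as it stands neither inequality is proved.
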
 
\begin{proof}
It is known that $G$ is isomorphic to one of the  Lie groups 
$$ S^3 \times S^3, SU(3), Sp(2), G_2.$$ 
We have the exact sequence
$$0 \to [S^3 \wedge S^3, S^3 \times S^3] \xrightarrow{q^*} [S^3 \times 
S^3, S^3 \times S^3] \to
[S^3 \vee S^3, S^3 \times S^3] \to 0,$$
where $q  \co S^3 \times S^3 \to S^{3} \wedge S^{3}$ is the projection map to 
the smash product. 
Since generally the projection 
$q \co S^m \times S^n \to S^{m} \wedge S^{n} \simeq S^{m + n}$
belongs to 
${\mathcal Z}^\infty(S^m \times S^n, S^{m + n})$ and by the above exact 
sequence  
we easily obtain 
$${\mathcal Z}^\infty(S^3 \times S^3) = {\mathcal Z}(S^3 \times S^3)
= {\rm Im} \; q^* = \pi_6(S^3) \oplus \pi_6(S^3) = \mathbb{Z}_{12} \oplus 
\mathbb{Z}_{12}.$$
Let  $f_1, f_2 \in $  
${\mathcal Z}^\infty(S^3 \times S^3)$
( = ${\mathcal Z}(S^3 \times S^3)$), 
then $f_1 \circ f_2 = 0$.  
We already know that ${\mathcal Z}^\infty(S^3 \times S^3)$ is not trivial. 
Thus 
$t_\infty(S^3 \times S^3) = t(S^3 \times S^3) = 2.$ Now we turn to the 
other cases.
Let $G$ be one of our Lie groups other than $S^3 \times S^3$ and let
$$q_G^* \co \pi_{\dim G}(G) \to {\mathcal Z}(G)$$ 
denote the induced map of $q_G \co G \to S^{\dim G}$, the pinching map to 
the top cell.
If $G = SU(3)$ or $Sp(2)$, then  ${\mathcal Z}^\infty(G) = {\mathcal 
Z}(G) \not= 0$ by Maruyama \cite{ma2}
(isomorphic to $\mathbb{Z}_{12}, \mathbb{Z}_{120}$ respectively)
and they coincide with ${\rm Im}~q_G^*$. Let $f_1, f_2 \in $  
${\rm Im}~q_G^*$ and $f_1 = q_G^*(x_1),\ f_2 = q_G^*(x_2)$, then $f_1 
\circ f_2$ is trivial since in the composition 
$$f_1 \circ f_2 \co G \xrightarrow{q_G}  S^{\dim G} \xrightarrow{x_2} G 
\xrightarrow{q_{G}} S^{\dim G}
\xrightarrow{x_1} G$$
$x_2$ is of finite order while $[S^{\dim G}, S^{\dim G}]$
is isomorphic to $\mathbb{Z}.$ 
Therefore we obtain that $ t(G) = t_\infty(G) = 2$
for $G = SU(3)$ and $Sp(2)$. Though it is shown that ${\mathcal Z}(G_2)$
is isomorphic to $\mathbb{Z}_2 \oplus  \mathbb{Z}_2 \oplus \mathbb{Z}_8 \oplus 
\mathbb{Z}_{21}$,
${\mathcal Z}^\infty(G_2)$ is not determined (for some partial results 
see \cite{ma2}). However by \cite{ma2} and {\=O}shima \cite{os}
${\mathcal Z}(G_2)$
is generated by the elements of ${\rm Im}\;q_{G_2}^*$ and $[1, \alpha]$.
Here $[1, \alpha]$ is the commutator of the identity map and some $\alpha 
\in [G_2, G_2]$
of infinite order. $[1, \alpha]$ is known to be of order 2. 
Thus ${\mathcal Z}^\infty(G_2)$
is not trivial as $[1, \alpha]$ is an element of ${\mathcal 
Z}^\infty(G_2).$
Let $f = q_{G_2}^*(x)
\in {\rm Im}\;q_{G_2}^*  (x \in \pi_{14}(G_2))$
and $g \in {\mathcal Z}(G_2)$,
then $f \circ g = 0 = g \circ f $. For, 
$$ g \circ f \co G_2 \xrightarrow{q_{G_2}} S^{14} \xrightarrow{x} G_2 
\xrightarrow{g} G_2$$
is trivial since $g \in {\mathcal Z}(G_2)$. 
$$ f \circ g \co G_2 \xrightarrow{g} G_2  \xrightarrow{q_{G_2}} S^{14} 
\xrightarrow{x} G_2$$
is also trivial as for the previous cases.
Moreover $[1, \alpha]\circ h = [h, \alpha \circ h] = 0$ for any
$h \in {\mathcal Z}(G_2)$, because $\alpha \circ h$ is an element of
${\mathcal Z}(G_2)$
and the group ${\mathcal Z}(G_2)$
is commutative. Let $f, f' \in {\rm Im}\;q_{G_2}^*.$ By the above arguments
$$(f + [1, \alpha])\circ (f' + [1, \alpha]) = f \circ (f' + [1, \alpha])
+ [1, \alpha] \circ (f' + [1, \alpha]) = 0.$$
Thus we have shown that all the compositions of the elements of ${\mathcal 
Z}(G_2)$
are trivial.  
As was noted above ${\mathcal Z}^\infty(G_2)$ contains an nontrivial 
element
$[1, \alpha]$, $t_\infty(G_2) > 1$ (actually $|{\mathcal Z}^\infty(G_2)|$
is greater than 42 see \cite{ma2}). Therefore $t(G_2) = t_\infty(G_2) = 
2$,
and we obtain the result.
\end{proof}
The assertion of \fullref{rank2} also holds for finite $H$--spaces of rank 
2. 
\begin{theorem}\label{rank2h}
Let $X$ be a 1--connected finite $H$--space of rank 2. Then $$ t(X) = 
t_\infty(X) = 2.$$
\end{theorem}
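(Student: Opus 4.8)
The plan is to deduce \fullref{rank2h} from \fullref{rank2} by passing to $p$--local homotopy. Since $X$ is $1$--connected, the shearing map $X\times X\to X\times X$, $(x,y)\mapsto(x,xy)$, induces an isomorphism on homology and is therefore a homotopy equivalence; hence $X$ admits a homotopy inverse and is group-like. Thus \fullref{localization} applies and gives $\mathcal{Z}(X)_p\cong\mathcal{Z}(X_p)$ and $\mathcal{Z}^\infty(X)_p\cong\mathcal{Z}^\infty(X_p)$ for every prime $p$, the multiplication on $X_p$ being the localized one. So the nilpotency can be analyzed prime by prime.

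The structural input I would use is the classification of $1$--connected finite $H$--spaces of rank $2$. By Hopf's theorem the rational cohomology is an exterior algebra on two odd-degree generators, so rationally $X$ is a product of two odd spheres and the possible degree pairs are restricted to those occurring for the rank $2$ Lie groups and for products of two odd spheres. Moreover, at each prime $p$ the localization $X_p$, with its induced multiplication, is $H$--equivalent to the $p$--localization of one of $S^3\times S^3$, $SU(3)$, $Sp(2)$, $G_2$, or of a product of two odd spheres $S^a\times S^b$. (The exotic rank $2$ $H$--spaces, such as the Hilton--Roitberg total spaces of $S^3$--bundles over $S^7$, are $p$--locally standard in this sense, sharing the cohomology of $Sp(2)$.)

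For the upper bound, observe that $\mathcal{Z}(X)$ is a subgroup of the finitely generated nilpotent group $[X,X]$, and every element of $\mathcal{Z}(X)$ induces zero on all rational homotopy (which for an $H$--space is concentrated in the generator degrees, all $\leq\dim X$), hence is rationally trivial and of finite order. Thus $\mathcal{Z}(X)$ is a finite nilpotent group, and an element of it vanishes as soon as it vanishes after localization at every prime. Given $a_1,a_2\in\mathcal{Z}(X)$ the composite $a_1\circ a_2$ again lies in $\mathcal{Z}(X)$, and under $\mathcal{Z}(X)_p\cong\mathcal{Z}(X_p)$ it corresponds to a composite of two elements of $\mathcal{Z}(X_p)$. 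Since $X_p$ is $H$--equivalent to the $p$--localization of a space $Y$ with $t(Y)=2$ by \fullref{rank2} (the sphere-product case being treated exactly as for $S^3\times S^3$), this composite is null for every $p$; hence $a_1\circ a_2=0$ and $t(X)\leq 2$.

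For the lower bound it suffices, again by localization, to exhibit a single prime $p$ with $\mathcal{Z}^\infty(X_p)\neq 0$. Writing $X_p\simeq Y_p$ as above and using $t_\infty(Y)=2$, the group $\mathcal{Z}^\infty(Y)$ is nontrivial; since the cohomology type of $X$ is fixed, the local models $Y$ all share that type, and at a prime dividing the order of the relevant $\mathcal{Z}^\infty(Y)$ one obtains a nonzero element of $\mathcal{Z}^\infty(X_p)\cong\mathcal{Z}^\infty(X)_p$, realized either as the image of the pinch map $q_X^*\co\pi_{\dim X}(X_p)\to\mathcal{Z}^\infty(X_p)$ to the top cell or as a commutator $[1,\alpha]$, as in the Lie group cases. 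Combining with $t_\infty(X)\leq t(X)\leq 2$ yields $t(X)=t_\infty(X)=2$. The main obstacle is the classification step: verifying that every rank $2$ finite $H$--space is, prime by prime and compatibly with its multiplication, the localization of a Lie group or a sphere product, and then checking that at some prime the top-cell construction (or the commutator) really produces a nonzero class in $\mathcal{Z}^\infty$ for every local model sharing the cohomology type of $X$.
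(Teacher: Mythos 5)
Your upper-bound argument is essentially the paper's: classify $X$ via Mimura--Nishida--Toda, show $X_p$ is homotopy equivalent to $G_p$ or $S^m_p\times S^n_p$ at each prime, get $t(X_p)\le 2$ there, and conclude $t(X)\le 2$ from the local-to-global principle for null-homotopies (the paper's \fullref{upperlemma}). That half goes through, because it only needs localization of individual maps, not of the algebraic structure on $[X,X]$.

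The lower bound, however, has a genuine gap, and it is precisely the one the paper flags in the remark following its proof. You argue that the shearing map makes $X$ group-like and that therefore \fullref{localization} gives $\mathcal{Z}^\infty(X)_p\cong\mathcal{Z}^\infty(X_p)$. The shearing-map argument only produces a homotopy \emph{inverse}; it does not produce homotopy \emph{associativity}, and ``group-like'' in \fullref{localization} requires both (the statement is ``with respect to the group structures induced from the multiplication of $G$,'' so $[G,G]$ must actually be a group under the $H$--multiplication). The spaces in the classification --- $S^7\times S^7$, the bundles $E_k$, the $G_{2,b}$ --- are not known to be homotopy associative ($S^7$ itself is not), so you cannot invoke \fullref{localization} to pull a nonzero element of $\mathcal{Z}^\infty(X_p)$ back to a nonzero element of $\mathcal{Z}^\infty(X)$; without that, exhibiting a single prime with $\mathcal{Z}^\infty(X_p)\ne 0$ proves nothing about $\mathcal{Z}^\infty(X)$. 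The paper routes around this by using James's bijection $T\co\mathcal{Z}^\infty(X)\to\mathcal{E}_\#^\infty(X)$, $f\mapsto 1+f$ (valid for any $H$--space, associative or not), together with the localization isomorphism $\mathcal{E}_\#^\infty(X)_p\cong\mathcal{E}_\#^\infty(X_p)$ --- which holds because $\mathcal{E}_\#^\infty$ is a genuine group under \emph{composition}, independent of the $H$--structure. Nontriviality of $\mathcal{E}_\#^\infty((E_k)_5)\cong\mathbb{Z}_5$ and $\mathcal{E}_\#^\infty((G_{2,b})_7)\cong\mathbb{Z}_7$ then forces $\mathcal{E}_\#^\infty(X)\ne 1$, hence $\mathcal{Z}^\infty(X)\ne 0$ via $T$. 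Your proof needs this (or some other associativity-free) mechanism; as written, the transfer from local to global nontriviality is unjustified. The same issue makes your parenthetical claim that $\mathcal{Z}(X)$ is a subgroup of a nilpotent group $[X,X]$ unsupported, though that claim is not actually needed for the upper bound.
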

We will use the following lemma.
\begin{lemma}\label{upperlemma}
Let $X$ be a finite nilpotent space. If $t(X_p) \leq n$ for all prime 
numbers $p$, then $t(X)
\leq n$. The same is true for $t_\infty(X)$. 
\end{lemma}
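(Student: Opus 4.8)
The plan is to reduce a global statement about nilpotency over $X$ to the local statements at each prime $p$ by using the arithmetic square (the fracture/pullback square) for the finite nilpotent space $X$, together with the naturality of localization for $\mathcal{Z}(X)$. The essential point I would exploit is that composition of self-maps is compatible with localization, so that if a composite $a_1 \circ a_2 \circ \cdots \circ a_n$ vanishes after localizing at every prime, it must already vanish integrally because $\mathcal{Z}(X)$ injects into the product of its localizations.

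First I would make precise the localization map on the level of semigroups. For $a \in \mathcal{Z}(X)$ write $a_p \in \mathcal{Z}(X_p)$ for its image under the localization homomorphism $\ell_p \co X \to X_p$; the fact that $a \in \mathcal{Z}(X)$ forces $a_p \in \mathcal{Z}(X_p)$, since the localization maps on homotopy groups commute with the induced homomorphisms $a_*$. The key algebraic input is that, for a finite nilpotent complex, the natural map
$$[X, X] \longrightarrow \prod_{p} [X, X_p]$$
is injective on the relevant subset: a self-map whose localizations are all nullhomotopic is itself nullhomotopic. This injectivity follows from the finiteness of $X$ (so only finitely many primes are involved and the rational information is controlled) via the Hasse/arithmetic square for nilpotent spaces.

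With this in hand the argument becomes formal. Given arbitrary $a_1, \ldots, a_n \in \mathcal{Z}(X)$, I would localize the composite at a prime $p$ and use that localization is functorial with respect to composition, so that
$$(a_1 \circ a_2 \circ \cdots \circ a_n)_p = (a_1)_p \circ (a_2)_p \circ \cdots \circ (a_n)_p.$$
Each factor $(a_i)_p$ lies in $\mathcal{Z}(X_p)$, and by hypothesis $t(X_p) \leq n$, so this $p$--local composite is trivial for every $p$. By the injectivity statement above, the integral composite $a_1 \circ \cdots \circ a_n$ is then trivial, giving $t(X) \leq n$. The identical argument, reading $\mathcal{Z}^\infty$ in place of $\mathcal{Z}$ and invoking $t_\infty(X_p) \leq n$, yields $t_\infty(X) \leq n$; here I would use the $\mathcal{Z}^\infty$ half of \fullref{localization}.

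The main obstacle is the injectivity of $X \mapsto \prod_p X_p$ on homotopy classes, i.e.\ verifying that a self-map of $X$ with trivial localization at every prime is nullhomotopic. This is where the hypothesis that $X$ is a \emph{finite} nilpotent space is indispensable: it guarantees that only finitely many primes can contribute and that the rationalization does not obstruct the conclusion, so the local-to-global gluing goes through. Once this is established the rest is bookkeeping with the functoriality of localization, which is supplied by \fullref{localization}.
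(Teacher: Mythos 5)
Your proposal is correct and follows essentially the same route as the paper: localize the composite $a_1 \circ \cdots \circ a_n$ at each prime, observe it vanishes there by hypothesis, and conclude integral triviality from the fact that a map out of a finite nilpotent complex which is nullhomotopic at every prime is nullhomotopic (the paper cites Hilton--Mislin--Roitberg, Corollary 5.12, Chapter II, for exactly this local-to-global detection). The extra detail you supply about functoriality of localization and why $(a_i)_p \in \mathcal{Z}(X_p)$ is consistent with \fullref{localization} and does not change the argument.
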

\begin{proof}
Let $a_1, \ldots , a_n$ be elements of ${\mathcal Z}(X)$. Then 
$$(a_1 \circ \cdots \circ a_n)_p = 0$$
for any prime number $p$ by the assumption. Thus  $a_1 \circ \cdots \circ 
a_n$ is trivial by Hilton--Mislin--Roitberg
\cite[Corollary 5.12, Chapter II]{hmr}.
\end{proof}

\begin{proof}[Proof of \fullref{rank2h}]
By the classical result of Mimura, Nishida and Toda~ \cite{mnt}, a
1--connected finite $H$--space $X$ of rank 2 is
homotopy equivalent to one of $S^3 \times S^3$, $SU(3)$, $E_k$ ($k$ = 0, 
1, 3, 4, 5), 
$S^7 \times S^7$ or $G_{2, b}$ ($-2 \leq b \leq 5$). Here $E_k$ is the 
principal
$S^3$--bundle over $S^7$ with the characteristic class $k\omega \in 
\pi_7(BS^3)$, $\omega$ 
a generator, and  $G_{2, b}$ is the principal $S^3$--bundle over the 
Stiefel manifold $V_{7, 2}$ 
induced by a suitable map $f_b \co V_{7, 2} \to BS^3$. We note that $E_1$ 
=$Sp(2)$, $G_{2, 0}
= G_2$ and  
we have already shown the assertion for $SU(3)$, $Sp(2)$ and $G_2$ in 
\fullref{rank2}.
By definition we obtain
\begin{align*}
(E_3)_p &\simeq Sp(2)_p \text{ for } p \not= 3 &
(E_3)_3 &\simeq S^3_3 \times S^7_3 \\
(E_4)_p &\simeq Sp(2)_p\text{ for }p \not= 2 &
(E_4)_2 &\simeq S^3_2 \times S^7_2\\
(E_5)_p &\simeq Sp(2)_p\text{ for all } p.
\end{align*} 
Let $p$ be a prime number, then
$$ (G_{2, b})_p \simeq (G_2)_p
\quad\text{or}\quad
(G_{2, b})_p \simeq S^3_p \times S^{11}_p$$
depending on $p$ by \cite{mnt}. 
Therefore if $X$ is a 1--connected finite $H$--space of rank 2, then $X_p$ is
homotopy equivalent to $G_p$ or 
$S^m_p \times S^n_p$ for each prime number $p$, where $G$ is a 
1--connected Lie group of
rank 2 and $m, n \in \{3, 5, 7, 11\}$. 
It is easy to see that $t(S^m_p \times S^n_p) \leq 2$ for any prime $p$. 
On the other
hand, ${\mathcal Z}(G_p) = {\mathcal Z}(G)_p$ by \fullref{localization} 
and
${\mathcal Z}(G)_p \subset {\mathcal Z}(G)$ since ${\mathcal Z}(G)$ is 
a finite nilpotent group for a 1--connected compact Lie group $G$ of rank 2.
Thus $t(G_p) \leq 2$ for an arbitrary prime number $p$. Thus we obtain 
that  
if $X$ is a 1--connected finite $H$--space of rank 2, then $t(X_p) \leq 2$ 
for all
prime numbers $p$ and hence 
$t(X) \leq 2$ by \fullref{upperlemma}.

Next we will
show that $t_\infty(X) > 1$ for our spaces. Namely, 
${\mathcal Z}^\infty(X) \not= 0$. 
First we consider the case where $X$ = $S^7 \times S^7$.
As in the proof of
\fullref{rank2}  
${\mathcal Z}^\infty(S^7 \times S^7) = \pi_{14}(S^7) \oplus \pi_{14}(S^7)$ 
which is
isomorphic to $\mathbb{Z}_{120} \oplus \mathbb{Z}_{120}$ by Toda \cite{toda}.
We should note that $[S^7 \times S^7, 
S^7 \times S^7]$ is a group despite that $S^7$ is not homotopy associative
(see Mimura--{\=O}shima \cite{mio}) though we do not need the group 
structure for our 
purpose.  
Let $X$ be an $H$--space. If $n \geq \dim~X$,  by a result of
James \cite{ja} there exists a bijection
\begin{equation}\label{T}
T \co {\mathcal Z}^n(X) \to {\mathcal E}_\#^n(X) 
\end{equation}  
defined by $f \to 1 + f$. Here ${\mathcal E}_\#^n(X)$
is the group of homotopy classes of self-homotopy equivalences  which 
induce the
identity map on $\pi_i(X)$ for $i \leq n$. 
$$ {\mathcal E}_\#^\infty(E_k)_5 \cong {\mathcal E}_\#^\infty((E_k)_5),$$  
by \cite{ma}. Note that $(E_k)_5$ is homotopy equivalent to $S_5^3 \times 
S_5^7.$
Namely $E_k$ is 5-regular. The group ${\mathcal E}_\#^\infty(S_5 ^3
\times S_5^7)$ is easily shown to be isomorphic to $\mathbb{Z}_{5}$ (see
\cite{ma2} or \cite{mio}).
Hence 
$${\mathcal E}_\#^\infty(E_k)_5 \cong \mathbb{Z}_{5}.$$ 
Thus by the bijection $T$, ${\mathcal Z}^\infty(E_k)$ is not trivial for 
$k$ = 0, 1, 3, 4, 5.
Similarly it is known that the spaces $G_{2, b}$ ($-2 \leq b \leq 5$) are 
7--regular,
that is 
$$(G_{2, b})_7 \simeq S^3_7 \times S^{11}_7.$$
Therefore we obtain 
$${\mathcal E}_\#^\infty(G_{2, b})_7 \cong {\mathcal E}_\#^\infty(S^3 
\times S^{11})_7.$$
The group  ${\mathcal E}_\#^\infty(S^3 \times S^{11})_7$ is not trivial 
($\cong \mathbb{Z}_7$ see \cite{ma2}). Thus
${\mathcal Z}^\infty(G_{2, b})$ is non-trivial by the same reason for 
$E_k$. 

Consequently, $t_\infty(X) > 1$ for all the 1--connected finite $H$--spaces 
of rank 2.
We complete the proof. 
\end{proof} 

\begin{rem} In the above proof we cannot use \fullref{localization} 
directly 
to show that $t_\infty(X) > 1$  because the spaces are not
necessarily homotopy associative.  
\end{rem}

\section[SU(4) and Sp(3)]{$SU(4)$ and $Sp(3)$} \label{sec:sec3}
In this section we consider rank 3 Lie groups $SU(4)$ and $Sp(3)$.
The statement of our theorem 
is completely the same as that of \fullref{rank2}, but its proof is more 
complicated.
\begin{theorem}\label{th.SU(4)}
$t(G) = t_\infty(G) = 2$
for $G = SU(4)$ and $Sp(3)$.
\end{theorem}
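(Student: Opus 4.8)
The plan follows the two-part strategy used in Theorem~\ref{rank2}: establish the upper bound $t(G)\leq 2$ by showing every length-two composite in ${\mathcal Z}(G)$ vanishes, and separately exhibit a nontrivial element of ${\mathcal Z}^\infty(G)$ to force $t_\infty(G)>1$, so that both nilpotencies are pinned to $2$. The essential difference from the rank $2$ case is that for $SU(4)$ and $Sp(3)$ the semigroup ${\mathcal Z}(G)$ is no longer generated by ${\rm Im}\,q_G^*$ alone: there are additional classes coming from attaching maps of the lower cells and from commutators $[1,\alpha]$, so the clean ``factor through the top sphere'' argument does not suffice by itself.

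First I would invoke the structure of ${\mathcal Z}(SU(4))$ and ${\mathcal Z}(Sp(3))$ as determined by S Oka. The idea is to read off an explicit (finite) generating set for ${\mathcal Z}(G)$ as an abelian group, organized by the cell structure of $G$ (for $SU(4)$ the cells are in dimensions $3,5,7,\dots$ up to $\dim=15$, and similarly $3,7,11,\dots$ up to $\dim=21$ for $Sp(3)$). Each generator is either in the image of a pinch/collapse map onto a sphere, or is a class detected by some intermediate skeletal quotient. I would then argue, generator by generator, that the composite of any two such classes is null. The key mechanism is the same as before: any generator $f$ that factors as $G \xrightarrow{q} S^k \xrightarrow{x} G$ through a sphere will kill a following map $g\in{\mathcal Z}(G)$, since $g$ induces zero on $\pi_k$ so $g\circ x$ is already trivial on the relevant homotopy, and conversely $f\circ g$ is trivial because $x$ has finite order while the self-maps of $S^k$ form $\mathbb{Z}$.

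Since ${\mathcal Z}(G)$ is a finite nilpotent (hence abelian after the relevant reductions) group, bilinearity lets me reduce the vanishing of $a\circ b$ to checking it on pairs of generators, exactly as in the $G_2$ computation where $(f+[1,\alpha])\circ(f'+[1,\alpha])$ was expanded and each term handled separately. The commutator-type generators $[1,\alpha]$ are dispatched by the identity $[1,\alpha]\circ h=[h,\alpha\circ h]=0$, valid because $\alpha\circ h\in{\mathcal Z}(G)$ and ${\mathcal Z}(G)$ is commutative. I expect the main obstacle to be the bookkeeping for the genuinely intermediate generators of ${\mathcal Z}(G)$ that do \emph{not} factor through the top cell: for those I must verify that a preceding or following factor lands in a homotopy group on which the companion map acts trivially, which requires the detailed information from Oka's description of the composition structure rather than a purely formal argument.

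For the lower bound I would produce a nonzero element of ${\mathcal Z}^\infty(G)$. The cleanest route is to exploit regularity: localize at a large prime $p$ where $G_p$ splits as a product of spheres $S^{n_1}_p\times\cdots\times S^{n_3}_p$, use \fullref{localization} to identify ${\mathcal Z}^\infty(G)_p\cong{\mathcal Z}^\infty(G_p)$, and then the James bijection $T$ together with the known nontriviality of ${\mathcal E}_\#^\infty$ of a product of localized spheres (as in the $E_k$ and $G_{2,b}$ arguments) gives a nonzero class. Alternatively, and perhaps more directly, the commutator $[1,\alpha]$ or a suitable image of $q_G^*$ restricted to elements acting trivially on all homotopy already furnishes a nontrivial element of ${\mathcal Z}^\infty(G)$. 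Either way this shows $t_\infty(G)\geq 2$, and combined with $t_\infty(G)\leq t(G)\leq 2$ we conclude $t(G)=t_\infty(G)=2$.
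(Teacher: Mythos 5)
Your overall strategy (upper bound by killing all length-two composites of generators; lower bound by localizing and exhibiting a nontrivial element of ${\mathcal Z}^\infty(G)$) is the paper's strategy, and your lower-bound argument matches what the paper does: it uses $SU(4)_3 \simeq Sp(2)_3\times S^5_3$ together with ${\mathcal Z}^\infty(Sp(2))\cong\mathbb{Z}_{120}$, and $Sp(3)_7\simeq (S^3\times S^7\times S^{11})_7$ with $\alpha_1\in\pi_{14}(S^3)_7$, then \fullref{localization}. But your upper bound has a genuine gap, and it sits exactly where you flag the ``bookkeeping'': the generators of ${\mathcal Z}(G)$ that do not factor through the top cell. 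You defer their treatment to ``detailed information from Oka's description of the composition structure'' without supplying the mechanism that makes their composites vanish, and that mechanism is the actual content of the proof. The paper's resolution is specific: by Oka, $SU(4)/SU(4)^7\simeq \Sigma K\vee S^{15}$ with $K=(S^7\vee S^9)\cup e^{11}$, and (\fullref{Imagelemma}) ${\mathcal Z}(SU(4))$ is generated by ${\rm Im}\;q_{SU(4)}^*$ together with ${\rm Im}\;\pi_1^*$, where $\pi_1\co SU(4)\to\Sigma K$ is the projection; this itself is extracted from Oka's computation of ${\mathcal E}_*(SU(4))$ via the James bijection $T$ of \eqref{T} and the inclusion ${\mathcal E}_\#^n(SU(4))\subset{\mathcal E}_*(SU(4))$, a translation step also absent from your sketch. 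The intermediate generators $g_i=y_i\circ\pi_1$ then annihilate each other because $[\Sigma K,SU(4)]$ is finite while $[\Sigma K,\Sigma K]\cong\mathbb{Z}\oplus\mathbb{Z}\oplus\mathbb{Z}$ is torsion free, so $\pi_1\circ y_2=0$; the mixed products die because $[\Sigma K,S^{15}]=0$ (all cells of $\Sigma K$ lie below dimension $15$) and because $g\in{\mathcal Z}(SU(4))$ kills $\pi_{15}$. This is your ``finite order versus $\mathbb{Z}$'' argument pushed one skeletal stage down, but identifying $\Sigma K$ and the torsion-freeness of its self-maps is what closes the proof, and your proposal does not provide it. The commutator generators $[1,\alpha]$ you import from the $G_2$ case play no role here.

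A smaller but real issue: ``bilinearity'' is not automatic. The identity $(f+g)\circ h=f\circ h+g\circ h$ holds for free from the group structure on the target, but $h\circ(f+g)=h\circ f+h\circ g$ fails for general $h$ (which need not be an $H$-map) and must be justified by \fullref{actionlemma}, using that $f$ is a coaction element in ${\rm Im}\;q_{SU(4)}^*$; the paper invokes this lemma explicitly before concluding that vanishing on pairs of generators suffices.
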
 
Our arguments in this section depend heavily on Oka's results in 
\cite{oka1}.

Let  $C_\varphi$ be the mapping cone of
$\varphi \co X \to Y$, $q \co C_\varphi \to \Sigma X$ the projection map.
Recall that there exists an action of
$[\Sigma X, C_\varphi]$ on 
$[C_\varphi, C_\varphi]$ induced by the
coaction map $\ell \co C_\varphi \to \Sigma X \vee C_\varphi$.
Namely, for  $\alpha \in [\Sigma X, C_\varphi]$ and $g \in [C_\varphi, 
C_\varphi]$,
$\alpha \cdot g$ is the following composition:
$$ C_\varphi \xrightarrow{\ell} \Sigma X \vee C_\varphi
\xrightarrow{\alpha \vee g} C_\varphi \vee C_\varphi \xrightarrow{\nabla}
C_\varphi$$
where $\nabla$ is the folding map. The following lemma is well known. 
\begin{lemma}[Hilton {\cite[Theorem 15.7]{hi}}]
\label{actionlemma}
Let 
$\alpha \in  [\Sigma X, C_\varphi]$ and  $g , h \in
[C_\varphi, C_\varphi]$ be arbitrary elements.     
If $C_\varphi$ is an $H$--space, then
$$\alpha \cdot g = q^*(\alpha) + g \text{ and }   
h \circ (q^*(\alpha) + g) = h \circ q^*(\alpha) + h \circ g,$$
where $+$ denotes the addition induced by the $H$--structure of $C_\varphi$. 
\end{lemma}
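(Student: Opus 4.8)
The plan is to unwind the definition of the action $\alpha\cdot g=\nabla\circ(\alpha\vee g)\circ\ell$ and to compare the fold map $\nabla$ with the $H$--multiplication $\mu$ of $C_\varphi$. The two elementary ingredients are: (i) the defining properties of the coaction, namely that composing $\ell$ with the collapse $\Sigma X\vee C_\varphi\to\Sigma X$ recovers $q$, while composing $\ell$ with the collapse $\Sigma X\vee C_\varphi\to C_\varphi$ recovers the identity; and (ii) the comparison $\nabla\simeq\mu\circ j$, where $j\co C_\varphi\vee C_\varphi\to C_\varphi\times C_\varphi$ is the inclusion of the wedge in the product. Identity (ii) holds because, the basepoint being a homotopy unit, $\mu\circ j$ restricts to the identity on each wedge summand, and a map out of a wedge is determined up to homotopy by its restrictions to the summands.

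For the first formula I would substitute (ii) to get $\alpha\cdot g\simeq\mu\circ j\circ(\alpha\vee g)\circ\ell$. A short diagram chase then shows that $j\circ(\alpha\vee g)$, viewed as a map $\Sigma X\vee C_\varphi\to C_\varphi\times C_\varphi$, has first component equal to $\alpha$ composed with the collapse onto $\Sigma X$ and second component equal to $g$ composed with the collapse onto $C_\varphi$. Precomposing with $\ell$ and invoking (i), these two components become $\alpha\circ q=q^*(\alpha)$ and $g$ respectively. Hence $\alpha\cdot g\simeq\mu\circ(q^*(\alpha),g)$, which is exactly $q^*(\alpha)+g$ by the definition of the $H$--space sum.

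For the second formula the crucial observation is that it is \emph{not} an instance of left distributivity of composition over the $H$--sum, which fails unless $h$ is an $H$--map. Instead I would exploit the naturality of the fold map: since $\nabla$ is the fold on $C_\varphi\vee C_\varphi$, one has $h\circ\nabla=\nabla\circ(h\vee h)$. Applying this to $h\circ(\alpha\cdot g)=h\circ\nabla\circ(\alpha\vee g)\circ\ell$ yields $\nabla\circ\bigl((h\circ\alpha)\vee(h\circ g)\bigr)\circ\ell=(h\circ\alpha)\cdot(h\circ g)$. The first formula, now applied to the class $h\circ\alpha\in[\Sigma X,C_\varphi]$ and the map $h\circ g$, rewrites this as $q^*(h\circ\alpha)+h\circ g=h\circ q^*(\alpha)+h\circ g$, which is the desired identity.

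The main obstacle is precisely this second formula: because post-composition does not in general distribute over the $H$--sum, one must recognise that the sum $q^*(\alpha)+g$ is the value of the coaction action and route the argument through the naturality of $\nabla$ together with the already-established first identity, rather than attempting a direct distributivity argument.
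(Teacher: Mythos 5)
Your proof is correct. Note that the paper itself offers no argument for this lemma: it is quoted directly from Hilton [Theorem 15.7], so there is no internal proof to compare against. Your write-up supplies the standard argument, and it handles both formulas properly: the identification $\nabla\simeq\mu\circ j$ (valid because the basepoint is a homotopy unit and $[\Sigma X\vee C_\varphi,\,C_\varphi]$ injects into the product of the restrictions to the wedge summands), together with the coaction identities $p_{\Sigma X}\circ\ell\simeq q$ and $p_{C_\varphi}\circ\ell\simeq \mathrm{id}$, gives the first formula; and you correctly recognize that the second formula is \emph{not} a naive left-distributivity statement (which would require $h$ to be an $H$--map) but instead follows from the strict naturality $h\circ\nabla=\nabla\circ(h\vee h)$ applied to the coaction sum, followed by a second application of the first formula to $h\circ\alpha$ and $h\circ g$. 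This is precisely the subtle point that makes the lemma nontrivial, and your routing of the argument through the coaction is the right one.
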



Now we consider the case where $G = SU(4)$.
As noted by Oka \cite[(2.2)]{oka1}, there exists
a homotopy equivalence as follows. 
$$SU(4)/SU(4)^7 \to \Sigma K \vee S^{15},$$
where $SU(4)^7$ is the 7--skeleton of $SU(4)$ and $K = (S^7 \vee S^9) \cup 
e^{11}$. We denote by
$$ \pi_1 \co SU(4) \to \Sigma K$$
the  projection map. We have the following maps (homomorphisms):
\begin{align*}
&q_{SU(4)}^* \co \pi_{15}(SU(4)) \to [SU(4), SU(4)]\\
&\pi_1^* \co [\Sigma K, SU(4)] \to [SU(4), SU(4)]
\end{align*}
\begin{lemma}\label{Imagelemma}
${\mathcal Z}(SU(4))$ and ${\mathcal Z}^\infty(SU(4))$ are generated by 
elements of
${\rm Im} \;q_{SU(4)}^* \cup {\rm Im} \pi_1^*$. In particular they are 
abelian groups.
\end{lemma}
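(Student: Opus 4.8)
The plan is to use the cofibration determined by the $7$--skeleton together with the group structure of $[-,SU(4)]$. Let $i\co SU(4)^7\to SU(4)$ be the skeletal inclusion; its mapping cone is $SU(4)/SU(4)^7\simeq\Sigma K\vee S^{15}$, and the composite of this equivalence with the projection is exactly $\pi=(\pi_1,q_{SU(4)})$. Since $SU(4)$ is a topological group the functor $[-,SU(4)]$ is group--valued and sends the cofibration to an exact sequence of groups
$$[\Sigma K\vee S^{15},SU(4)]\xrightarrow{\pi^*}[SU(4),SU(4)]\xrightarrow{i^*}[SU(4)^7,SU(4)].$$
Because $[\Sigma K\vee S^{15},SU(4)]\cong[\Sigma K,SU(4)]\oplus\pi_{15}(SU(4))$ and $\pi^*$ restricts to $\pi_1^*$ and $q_{SU(4)}^*$ on the two summands, ${\rm Im}\,\pi^*$ is precisely the subgroup generated by ${\rm Im}\,\pi_1^*\cup{\rm Im}\,q_{SU(4)}^*$. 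Thus the entire lemma reduces to one assertion: every $f\in{\mathcal Z}(SU(4))$ (and every $f\in{\mathcal Z}^\infty(SU(4))$) lies in $\ker i^*$, i.e.\ $f\circ i\simeq 0$. The final ``abelian'' clause is then automatic, since $\Sigma K$ is a suspension and $SU(4)$ an $H$--space, so $[\Sigma K,SU(4)]\oplus\pi_{15}(SU(4))$ is abelian by Eckmann--Hilton, and hence so is its homomorphic image ${\rm Im}\,\pi^*\supseteq{\mathcal Z}(SU(4))$.

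To prove $f\circ i\simeq 0$ I would nullhomotope $f$ cell by cell over the three cells of $SU(4)^7$, which lie in dimensions $3,5,7$; here $\pi_3(SU(4))=\pi_5(SU(4))=\pi_7(SU(4))=\mathbb{Z}$ while $\pi_4(SU(4))=\pi_6(SU(4))=0$. On the $3$--cell $f\circ\iota_3=f_*(\text{generator of }\pi_3)=0$ because $f\in{\mathcal Z}$. Extending the resulting nullhomotopy over the $5$--cell leaves an obstruction $\gamma_5\in\pi_5(SU(4))$ equal to the Toda bracket $\langle f,\iota_3,\eta\rangle$, where $\eta\co S^4\to S^3$ is the attaching map; its indeterminacy $f_*\pi_5(SU(4))+\pi_4(SU(4))\circ\Sigma\eta$ vanishes. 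Since $j\circ\theta\in\pi_6(SU(4))=0$ for the $5$--skeleton inclusion $j\co SU(4)^5\to SU(4)$ and the attaching map $\theta\co S^6\to SU(4)^5$ of the top cell, the further obstruction $\gamma_7\in\pi_7(SU(4))$ is defined and lies in $\langle f,j,\theta\rangle$, whose indeterminacy again vanishes: $f_*\pi_7(SU(4))=0$, and ${\rm Im}((\Sigma\theta)^*)$ is a torsion subgroup of the torsion--free group $\pi_7(SU(4))$.

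The crux is to see that these two secondary obstructions are actually zero; this I would do by playing the finite order of the attaching maps against the torsion--freeness of the targets. As $\pi_4(S^3)=\mathbb{Z}_2$ the class $\eta$ has order dividing $2$, so by linearity of the bracket $2\gamma_5\in\langle f,\iota_3,2\eta\rangle=\langle f,\iota_3,0\rangle=\{0\}$, and torsion--freeness of $\pi_5(SU(4))=\mathbb{Z}$ forces $\gamma_5=0$, whence $f\circ j\simeq 0$. Likewise $\pi_6(SU(4)^5)$ is finite, being squeezed between the finite groups $\pi_6(S^3)$ and $\pi_6(S^5)$, so $\theta$ has some finite order $m$ and $m\gamma_7\in\langle f,j,m\theta\rangle=\{0\}$ forces $\gamma_7=0$ in $\mathbb{Z}$; therefore $f\circ i\simeq 0$. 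The same reasoning applies to ${\mathcal Z}^\infty(SU(4))$ verbatim. The main obstacle is exactly this control of $\gamma_5$ and $\gamma_7$: the primary vanishing is immediate from $f\in{\mathcal Z}$, but excluding a nonzero secondary contribution is where one must run the torsion argument above --- or, as in the author's approach, invoke Oka's explicit computation of $[SU(4),SU(4)]$ in \cite{oka1}.
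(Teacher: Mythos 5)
Your proof is correct, but it takes a genuinely different route from the paper's. The paper works on the equivalence side: it uses James's bijection $T\co{\mathcal Z}^n(X)\to{\mathcal E}_\#^n(X)$, the inclusion ${\mathcal E}_\#^n(SU(4))\subset{\mathcal E}_*(SU(4))$ coming from the exterior cohomology ring, and then Oka's explicit generators of ${\mathcal E}_*(SU(4))$ (his Theorems 2.4 and 8.3), checking by direct computation that products of those generators stay in the form $q_{SU(4)}^*(x)+\pi_1^*(y)+1_{SU(4)}$. You instead work directly with the Puppe sequence of the skeletal inclusion $i\co SU(4)^7\to SU(4)$, reducing the lemma to ${\mathcal Z}(SU(4))\subseteq\ker i^*={\rm Im}\,\pi^*$, and then kill $f\circ i$ cell by cell on $\Sigma\mathbb{C}P^3$; the decisive point --- that the secondary obstructions $\gamma_5,\gamma_7$ are annihilated by the (finite) orders of the attaching maps while $\pi_5(SU(4))=\pi_7(SU(4))=\mathbb{Z}$ are torsion free and $\pi_4=\pi_6=0$ --- checks out (one can verify it without Toda brackets from the exact sequence of the pair, since ${\rm Im}\bigl(q_*\co\pi_5(\Sigma\mathbb{C}P^2)\to\pi_5(S^5)\bigr)\supseteq 2\mathbb{Z}$, and similarly in dimension $7$). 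What your approach buys is independence from Oka's computation of ${\mathcal E}_*(SU(4))$: you only need the splitting $SU(4)/SU(4)^7\simeq\Sigma K\vee S^{15}$ to separate ${\rm Im}\,\pi^*$ into the two stated images, and the argument transfers verbatim to $Sp(3)$ and to other groups whose skeleta have finite-order attaching maps landing in trivial or torsion-free homotopy groups; it also gives the slightly sharper statement ${\mathcal Z}(SU(4))=\ker i^*\cap{\mathcal Z}(SU(4))={\rm Im}\,\pi^*\cap{\mathcal Z}(SU(4))$. What the paper's route buys is brevity given Oka's results, which are needed elsewhere in the section anyway. One small point worth making explicit in your write-up: in the resulting decomposition $z=\pi_1^*(y)+q_{SU(4)}^*(x)$ the summand $q_{SU(4)}^*(x)$ lies in ${\mathcal Z}(SU(4))$ automatically (as $\pi_{15}(SU(4))$ is finite and $q_{SU(4)}$ kills $\pi_i$ for $i\le 15$), hence so does $\pi_1^*(y)=z-q_{SU(4)}^*(x)$; this is the form in which the lemma is actually used in the proof of the theorem.
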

\begin{proof}
We show our claim is true for  ${\mathcal Z}(SU(4))$ since ${\mathcal 
Z}^\infty(SU(4))$ is a subgroup of ${\mathcal Z}(SU(4))$. 
Let ${\mathcal E}_*(X)$
denote the the group of homotopy classes of self-homotopy equivalences  
which induce the identity map on the integral homology groups of $X$.
Since $H^*(SU(4))$ is isomorphic to the exterior algebra
$\Lambda_{\mathbb{Z}} (x_3, x_5, x_7)$, we have 
$$ {\mathcal E}_\#^n(SU(4)) \subset {\mathcal E}_*(SU(4)).$$
Here $n \geq 7$.
By \cite[Theorem 2.4, Thorem 8.3]{oka1},  ${\mathcal E}_*(SU(4))$ is 
generated by elements
$$q_{SU(4)}^*(x) + 1_{SU(4)} \quad\text{and}\quad
\pi_1^*(y) + 1_{SU(4)},$$
where $x \in \pi_{15}(SU(4))$ and  $y \in [\Sigma K, SU(4)]$. We easily
see that
\begin{align*}
(q_{SU(4)}^*(x) + 1_{SU(4)})^n &=  q_{SU(4)}^*(nx) + 1_{SU(4)}, \\
(\pi_1^*(y) + 1_{SU(4)})^n &= \pi_1^*(ny) + 1_{SU(4)}
\end{align*}
for $n \in \mathbb{Z}$, (cf \cite{oka1}). 
Moreover,   
$$(\pi_1^*(y) + 1_{SU(4)})\circ(q_{SU(4)}^*(x) + 1_{SU(4)}) = 
q_{SU(4)}^*(y \circ \pi_1 \circ x + x) + \pi_1^*(y) + 1_{SU(4)},$$
by \fullref{actionlemma}, and 
$$(q_{SU(4)}^*(x) + 1_{SU(4)})\circ(\pi_1^*(y) + 1_{SU(4)}) = 
q_{SU(4)}^*(x) + \pi_1^*(y) + 1_{SU(4)}.$$
The second equality follows from
$$q_{SU(4)}\circ(\pi_1^*(y) + 1_{SU(4)}) = q_{SU(4)}.$$
Therefore by the bijection $T$ given in \eqref{T} ${\mathcal Z}(SU(4))$ 
is
generated by some elements of ${\rm Im}\; q_{SU(4)}^*$ and ${\rm Im} \;
\pi_1^*$.

It is known that ${\rm Im} \; q_{SU(4)}^*$ is in the center of $[SU(4), 
SU(4)]$ by
\cite{wh}. Therefore ${\mathcal Z}(SU(4))$ is an 
abelian group since ${\rm Im} \; \pi_1^*$ is abelian.    
\end{proof}

Now we prove the main theorem in this section.
\begin{proof}
\rm{(of \fullref{th.SU(4)})} \qua By \fullref{Imagelemma} an element of
${\mathcal Z}(SU(4))$ is of the form $z = f + g$ with $f \in {\rm 
Im} \; q_{SU(4)}^*$
and $g \in {\rm Im} \; \pi_1^*$. Since $f$ induces the trivial map on 
$\pi_i(SU(4))$, $i \leq 15$, $g$ is an element of  ${\mathcal Z}(SU(4))$. 
Therefore $g{\circ}f{=}0$.  We also have  $f{\circ}g{=}0$ easily.   Let 
$f_1, f_2$ 
be elements of ${\rm Im} \; q_{SU(4)}^*$, then 
$f_1{\circ}f_2{=}0$.
Now $[\Sigma K, SU(4)]$ is a finite group and $\pi_{1*} \co [\Sigma K, 
SU(4)] \to [\Sigma K, \Sigma K]$ is a 
homomorphism. We have the following isomorphism by \cite[Lemma 3.3]{oka1}:
$$ [\Sigma K, \Sigma K] \cong \mathbb{Z} \oplus \mathbb{Z} \oplus \mathbb{Z}.$$
Therefore $g_1{\circ}g_2 = 0$ for $g_1,  g_2$ $\in {\rm Im} \; \pi_1^*$. 
Let $h \co SU(4) \to SU(4)$ be any map, then we have $(f + g)\circ h 
= f \circ h + g \circ h,$ since the addition is defined by the group 
structure of $SU(4)$.
Moreover by \fullref{actionlemma} $h \circ (f + g) = h \circ f + h \circ 
g$.
Here $f \in {\rm Im} \; q_{SU(4)}^*$
and $g \in {\rm Im} \; \pi_1^*$ as above. 
Consequently, the composition of any two elements
$z = f_1 + g_1$, $z' = f_2 + g_2$ of ${\mathcal Z}(SU(4))$ is trivial, 
where
$f_1, f_2 \in {\rm Im}\;q_{SU(4)}^*$
and  $g_1, g_2 \in {\rm Im}\; \pi_1^*$. Therefore we have obtained that 
$t(SU(4)) \leq 2$ and thus  $t_\infty(SU(4)) \leq 2$ 
(recall that $t_\infty(SU(4)) \leq t(SU(4))$).

Next we will show that 
$1 < t_\infty(SU(4))$, that is, ${\mathcal Z}^\infty(SU(4))$ is not 
trivial. It is known that
$SU(4)_3$ 
is homotopy equivalent to $Sp(2)_3 \times S^5_3$. Therefore ${\mathcal 
Z}^\infty (Sp(2)_3)$ $\subset$
${\mathcal Z}^\infty(SU(4)_3)$. As was mentioned in the proof of 
\fullref{rank2},
${\mathcal Z}^\infty(Sp(2))$
is isomorphic to $\mathbb{Z}_{120}$, and  hence 
${\mathcal Z}^\infty(SU(4)_3)$ is nontrivial. 
Therefore by \fullref{localization}, ${\mathcal Z}^\infty(SU(4))$ is also 
nontrivial.   

The proof for $Sp(3)$ is parallel to that of $SU(4)$ by using 
\cite[Theorem 2.5, Theorem 4.3]{oka1}. We can show that 
$t(Sp(3)) \leq 2$ as in the 
$SU(4)$ case.
To show that $t(Sp(3)) = 2$, we use the equivalence 
$ Sp(3)_7 \simeq (S^3 \times S^7 \times S^{11})_7$ and the nontriviality 
of ${\mathcal Z}^\infty((S^3 \times S^7 \times S^{11})_7)$.  
This nontriviality is obtained by the existence of an essential map:
$$ S^3 \times S^7 \times S^{11} \to S^3 \times S^{11} \to S^{14} 
\xrightarrow{\alpha_1} S^3 \to S^3 \times S^7 \times S^{11}$$
where $\alpha_1$ is a generator of $\pi_{14}(S^3)_7 \cong \mathbb{Z}_7$ and 
other maps are
the canonical projections and
the inclusions.
\end{proof}
\section{The lower bounds for classical groups} \label{sec:sec4}
In this section we give a lower bound for $t_*(G)$ when $G$ is
$SU(n)$ or $Sp(n)$. We should admit that it is a crude one,
but it gives us the theorem which states that $t_\infty(G)$ could be 
arbitrarily large for
classical groups.
\begin{prop}\label{nilpropo}
Let $X$ be a homotopy associative finite $H$--space, then $t(X) \geq t(X_0)$ 
and 
$t_\infty(X) \geq t_\infty(X_0)$.
\end{prop}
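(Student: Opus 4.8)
The plan is to exploit the functoriality of rationalization. Write $X_0$ for the rationalization of $X$ and let $r\co [X,X]\to[X_0,X_0]$, $f\mapsto f_0$, be the map induced by the rationalization $e\co X\to X_0$. Because rationalization is a functor carrying the $H$--structure of $X$ to that of $X_0$, the map $r$ is a homomorphism for both composition and the $H$--addition, and since $(f_0)_*=f_*\otimes\mathbb{Q}$ it sends $\mathcal{Z}(X)$ into $\mathcal{Z}(X_0)$ and $\mathcal{Z}^\infty(X)$ into $\mathcal{Z}^\infty(X_0)$. I will treat $t$; the argument for $t_\infty$ is identical. Setting $s=t(X_0)$, the strategy is to take a nonzero product of length $s-1$ in $\mathcal{Z}(X_0)$ and manufacture from it a nonzero product of length $s-1$ in $\mathcal{Z}(X)$, which forces $t(X)\geq s=t(X_0)$.

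First I would set up the rational model. A connected rational $H$--space of finite type is homotopy--commutative and associative and splits as a product of odd rational Eilenberg--MacLane spaces, so $H^*(X_0;\mathbb{Q})=\Lambda V$ with $V$ finite dimensional and concentrated in odd degrees. A self--map of $X_0$ is determined by its effect on generators, a degree--zero linear map $V\to\Lambda V$; under this identification composition becomes substitution of polynomials, the $H$--addition becomes the linear structure (the comultiplication being primitively generated), and $\mathcal{Z}(X_0)$ becomes the subspace $W=\mathrm{Hom}(V,\Lambda^{\geq 2}V)$ of maps sending generators to decomposables. Thus $W$ is a finite--dimensional $\mathbb{Q}$--vector space on which the length--$k$ composition is a polynomial map $C_k\co W^k\to W$ with rational coefficients. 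Nilpotency is visible here, since each composition at least doubles the decomposability filtration, so $C_k\equiv 0$ once $2^k$ exceeds the top degree; in particular $t(X_0)=s$ says exactly that $C_{s-1}$ is a \emph{nonzero} polynomial map while $C_s\equiv 0$.

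Next I would pin down the image $\Lambda:=r(\mathcal{Z}(X))\subseteq W$. By the rational case of \fullref{localization} (from \cite{ma}; the hypothesis that $X$ is homotopy associative is what makes $[X,X]$ a nilpotent group, so that localization applies) one has $\mathcal{Z}(X)_0\cong\mathcal{Z}(X_0)$, and under this isomorphism $r$ realizes the rationalization map of the nilpotent group $\mathcal{Z}(X)$. Hence every element of $W$ has a nonzero integral multiple in $\Lambda$, so $\Lambda$ spans $W$ over $\mathbb{Q}$ and therefore contains a full--rank lattice $\mathbb{Z}^{\dim W}$. Since $C_{s-1}$ is a nonzero polynomial map over $\mathbb{Q}$, it cannot vanish identically on the Zariski--dense set $\Lambda^{s-1}\supseteq\mathbb{Z}^{(\dim W)(s-1)}$ (a nonzero rational polynomial does not vanish at all integer points). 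Thus there are $a_1,\dots,a_{s-1}\in\mathcal{Z}(X)$ with $r(a_1)\circ\cdots\circ r(a_{s-1})=C_{s-1}(r(a_1),\dots,r(a_{s-1}))\neq 0$; as $r$ is multiplicative this gives $r(a_1\circ\cdots\circ a_{s-1})\neq 0$, hence $a_1\circ\cdots\circ a_{s-1}\neq 0$ and $t(X)\geq s$. Running the same argument with $\mathcal{Z}^\infty$ throughout yields $t_\infty(X)\geq t_\infty(X_0)$.

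The step I expect to be the main obstacle is the rational model itself: verifying that composition on $\mathcal{Z}(X_0)$ genuinely is a polynomial self--map of a single finite--dimensional $\mathbb{Q}$--vector space, and that the $H$--addition coincides with its linear structure (so that $\Lambda$ is an honest lattice and $C_{s-1}$ an honest polynomial). This care is essential because left--distributivity of composition \emph{fails} on $\mathcal{Z}(X_0)$ — the cross terms land in higher decomposability filtration — so one cannot reduce to a scalar--multiple or bilinearity argument; it is precisely the polynomial--map--plus--Zariski--density route that circumvents this failure.
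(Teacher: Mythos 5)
Your argument is correct, and it shares the paper's skeleton --- identify $\mathcal{Z}(X_0)$ with the degree-zero maps $V\to\Lambda^{\geq 2}V$ via the splitting of $X_0$ into rational Eilenberg--MacLane spaces, and use \fullref{localization} to realize integer multiples of elements of $\mathcal{Z}(X_0)$ by elements of $\mathcal{Z}(X)$ --- but it handles the crux differently. The crux is that composition is not multilinear, so one cannot simply scale a nontrivial composition $g_1\circ\cdots\circ g_{s-1}$ by integers; the paper flags this explicitly (noting $(m_1g_1\circ\cdots\circ m_ng_n)^*(x_k)\neq m_1\cdots m_n(g_1\circ\cdots\circ g_n)^*(x_k)$ in general) and gets around it by building a concrete basis of maps $f_{i_1i_2\cdots i_j}$ from smash products of Eilenberg--MacLane spaces and checking by hand that a suitable composition of integer multiples still carries some generator $x_k$ to a nonzero integer multiple of a decomposable. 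You instead package the length-$(s-1)$ composition as a polynomial map $C_{s-1}$ on the finite-dimensional $\mathbb{Q}$--vector space $W$, note that $t(X_0)=s$ forces $C_{s-1}\not\equiv 0$, and invoke the fact that a nonzero rational polynomial cannot vanish on a full-rank lattice, which $r(\mathcal{Z}(X))$ contains. This buys a cleaner and more robust proof: no explicit basis, no bookkeeping of which decomposable survives, and the failure of left-distributivity is circumvented rather than computed around. Two small points to secure in a written version: the identification of the $H$--addition with the linear structure of $W$ requires choosing the exterior generators primitive (available by Milnor--Moore/Hopf), so that $r(\mathcal{Z}(X))$ is a finitely generated subgroup spanning $W$ over $\mathbb{Q}$ and hence genuinely contains a lattice of full rank; and the degenerate case $t(X_0)=1$ should be set aside as trivial.
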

\begin{proof}
The rational cohomology ring $H^*(X; \mathbb{Q})$ is isomorphic to 
the exterior algebra $\Lambda_{\mathbb{Q}}(x_1, \cdots x_r)$ on primitive 
elements $x_i$ with $\dim x_i = n_i$ odd.
Let $n$ denote $t(X)$ and $a_1, \dots, a_n$ be elements of ${\mathcal 
Z}(X_0).$
We will show that $a_1 \circ \cdots \circ a_n = 0$. As $X_0$ is homotopy 
equivalent to
the product of the Eilenberg-MacLane spaces, the elements of $[X_0, X_0]$ 
are determined by
their induced maps on cohomology groups.  Actually ${\mathcal Z}(X_0)$ is 
isomorphic to the module generated by the decomposable elements of degree 
$\{\dim x_i\}$. 
We define a basis for ${\mathcal Z}(X_0)$ as follows. Let 
$\{x_{i_1}x_{i_2}\cdots x_{i_j}\}$ 
with $i_1 < i_2 <\cdots < i_j$ be the basis for the module of 
decomposable elements of
$H^*(X; \mathbb{Q}).$
Let $\{{i_1} <  {i_2} < \cdots <  {i_j}\}$ be a set of
subsets of $\{1, \dots, r\}$ such that
$$\dim x_{i_1} + \dim x_{i_2} + \cdots + \dim x_{i_j} = \dim x_i.$$
Let  $\wedge \co\smash{\prod_{k=1}^j}K(\mathbb{Q}, n_{i_k})
\to \smash{\wedge_{k=1}^j}K(\mathbb{Q}, n_{i_k})$ be the projection to the smash 
product.
Then the map $f_{{i_1}{i_2} \cdots {i_j}}$ is defined by the composition
$$ X_0 \to
{\textstyle \prod_{k=1}^j}K(\mathbb{Q}, n_{i_k}) \xrightarrow{\wedge}
{\textstyle \wedge_{k=1}^j} K(\mathbb{Q}, n_{i_k}) \to K(\mathbb{Q}, n_i)
\to X_0,$$
where the first and the
last maps in the composition are the projection and inclusion maps, the 
third map is the map corresponding to the cohomology element 
$x_{i_1}x_{i_2}\cdots x_{i_j}$.   
Now we assume that $a_1 \circ \cdots \circ a_n$ is not trivial. Hence
$(a_1 \circ \cdots \circ a_n)^*(x_k) \not= 0$ for some $x_k$.   
We have  
$$(a_1 \circ \cdots \circ a_{i-1})^*(x_k) = \Sigma t_j 
x_{j_1}x_{j_2}\cdots x_{j_\ell} ,
$$
for $i \leq n$, where $t_j$ are nonzero rational numbers.  
Thus 
$$ a_i^*(x_{j_1}x_{j_2}\cdots x_{j_\ell}) $$
is nontrivial for some $x_{j_1}x_{j_2}\cdots x_{j_\ell}$. It follows that 
for each $x_{j_t}$ there exist decomposable elements $x_{s_1}x_{s_2}\cdots 
x_{s_k}$ such that $\dim x_{j_{t}} = \dim x_{s_{1}}x_{s_{2}}\cdots 
x_{s_{k}}.$ Therefore the maps $f_{s_{1}s_{2}\cdots s_{k}}$ are defined 
for $\{x_{s_{1}}x_{s_{2}} \cdots x_{s_{k}}\},$ and  $ 
a_i^*(x_{j_1}x_{j_2}\cdots x_{j_\ell})$ is $(\Sigma r_{s_{1}s_{2}\cdots 
s_{k}}f_{s_{1}s_{2}\cdots s_{k}})^*(x_{j_1}x_{j_2}\cdots x_{j_\ell})$, 
where $r_{s_{1}s_{2}\cdots s_{k}}$ are rational numbers.   
From the nontriviality of $(a_1 \circ \cdots \circ a_n)^*(x_k)$, we obtain 
the nontrivial iterated composition  :  
$$ g_1 \circ \cdots \circ g_n \in {\mathcal Z}({X_0})$$
such that $g_i = \Sigma f_{{i_1}{i_2} \cdots {i_j}}$ for each $g_i$.  Let 
$m_1, \ldots, m_n$ be nonzero integers.
Since we see that 
$$ (m_1g_1 \circ \cdots \circ m_ng_n)^*(x_k) = dx_{i_1}x_{i_2}\cdots 
x_{i_j}$$
for some $x_{i_1}x_{i_2}\cdots x_{i_j}$ with ${i_1} <  {i_2} < \cdots <  
{i_j}$ and nontrivial integer $d$,  thus the composition $m_1g_1 \circ 
\cdots \circ m_ng_n$ is
essential.  Here we should note that $(m_1g_1 \circ \cdots \circ 
m_ng_n)^*(x_k)$ is not equal to $m_1m_2\cdots m_n (g_1 \circ \cdots \circ 
g_n)^*(x_k)$ in general.  
As the homomorphism ${\mathcal Z}(X) \to {\mathcal Z}(X_0)$ is the 
localization by
\fullref{localization}, $m_i g_i$ is 
an element of ${\mathcal Z}(X)$ for some nonzero integer $m_i$. So, we can 
find a nontrivial
composition 
$m_1g_1 \circ \cdots \circ m_ng_n$ with $m_i g_i$ $\in {\mathcal Z}(X)$, 
this is a contradiction, hence we obtain that $t(X) \geq t(X_0)$.  

Since ${\mathcal Z}(X_0) = {\mathcal Z}^\infty(X_0)$ in our case, we can 
show that $t_\infty(X) \geq t_\infty(X_0)$ similarly.      
\end{proof}

Now we apply \fullref{nilpropo} to the classical groups $SU(n)$ and 
$Sp(n)$.
\begin{theorem} Let $\ell$ be a natural number. Then
\begin{eqnarray*}
t_\infty(SU(n)) > \ell& \text{ for } & n  \geq (3^{\ell} + 1)^2/2\\ 
t_\infty(Sp(n)) > \ell& \text{ for } & n \geq   (2\cdot 5^{2\ell} + 5^\ell + 1)/4.
\end{eqnarray*} 
\end{theorem}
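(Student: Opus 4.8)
The plan is to reduce everything to the rationalization by invoking \fullref{nilpropo}. Since $SU(n)$ and $Sp(n)$ are associative finite $H$--spaces, it suffices to prove $t_\infty((SU(n))_0) > \ell$ and $t_\infty((Sp(n))_0) > \ell$. As recorded in the proof of \fullref{nilpropo}, for $X_0$ a product of rational Eilenberg--Mac Lane spaces one has $\mathcal{Z}(X_0) = \mathcal{Z}^\infty(X_0)$, an element $g$ is completely determined by the decomposable element $g^*(x)$ of degree $\deg x$ it assigns to each exterior generator $x$ of $H^*(X_0;\mathbb{Q})$, and composition is substitution: $g^*$ is a ring homomorphism, so $(g_1\circ\cdots\circ g_\ell)^* = g_\ell^*\circ\cdots\circ g_1^*$ is evaluated by iterated substitution of monomials. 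Hence it is enough to exhibit $\ell$ such maps whose composite carries some generator to a nonzero monomial, for then $t_\infty(X_0) = t(X_0) > \ell$.

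Next I translate this into combinatorics. Write the generators of $SU(n)$ as $x_i$ with $\deg x_i = 2i+1$ ($1\le i\le n-1$) and those of $Sp(n)$ as $x_i$ with $\deg x_i = 4i-1$ ($1\le i\le n$). A nonzero decomposable of the same degree as $x_i$ is a sum of squarefree monomials in distinct generators; matching degree and parity forces at least $3$ factors for $SU$ and at least $5$ for $Sp$, and with the minimal number of factors the children indices must sum to $i-1$ (for $SU$) respectively $i+1$ (for $Sp$). I will use this minimal branching $m=3$, resp.\ $m=5$, throughout, since more factors only demand more generators and enlarge $n$. A nonzero $\ell$--fold composite is then encoded by a rooted $m$--ary tree of depth $\ell$ whose nodes carry generator indices, in which the children of a node labelled $i$ are $m$ distinct indices summing to $i\mp1$, and — the delicate requirement — the labels occurring at any fixed level are pairwise distinct. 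From such a tree I define $g_j$ by sending each level-$(j{-}1)$ label to the monomial of its children and every other generator to $0$: distinctness at level $j-1$ makes $g_j$ well defined, distinctness at level $j$ keeps the image monomial nonzero, and so $(g_1\circ\cdots\circ g_\ell)^*$ carries the root generator to $\prod(\text{leaves})\neq 0$.

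For the explicit tree I let the $m^\ell$ leaves carry the labels $1,2,\dots,m^\ell$ and build upward by partitioning the labels of each level into consecutive blocks of $m$, a node's label being (block sum)$\,\mp 1$. An easy induction shows that the labels at each level form an arithmetic progression whose common difference is multiplied by $m^2$ each time one moves up a level (starting from difference $1$ at the leaves), so the labels at every fixed level are indeed distinct. Telescoping the index constraints, the root label equals $\tfrac12 m^\ell(m^\ell+1)\pm\tfrac{m^\ell-1}{m-1}$, the minimal sum of $m^\ell$ distinct leaf labels corrected by the geometric tree term, with $+$ for $SU$ and $-$ for $Sp$. This evaluates to $\tfrac12(3^\ell+1)^2-1$ and $\tfrac14(2\cdot5^{2\ell}+5^\ell+1)$ respectively. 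As the root must be a genuine generator index, namely $\le n-1$ for $SU(n)$ and $\le n$ for $Sp(n)$, these are exactly the stated thresholds on $n$, and for such $n$ we obtain the desired nonzero composite, whence $t_\infty > \ell$.

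The one genuinely substantive point — and the main obstacle — is maintaining pairwise-distinct labels at \emph{every} level simultaneously with the rigid degree constraints: it is this distinctness, combined with the fact that $m^\ell$ distinct positive leaf labels have sum at least $\tfrac12 m^\ell(m^\ell+1)$, that forces the quadratic exponent $m^{2\ell}$ in the bound. A balanced tree that ignored distinctness would only produce the far weaker estimate $n\gtrsim m^{\ell+1}/2$. Thus the heart of the argument is checking that the consecutive-block grouping keeps each level distinct (via the arithmetic-progression observation) and that the resulting root index hits the claimed value precisely; the remaining passage back to $SU(n)$ and $Sp(n)$ is exactly the content of \fullref{nilpropo}.
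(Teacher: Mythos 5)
Your proposal is correct and follows essentially the same route as the paper: reduce to the rationalization via \fullref{nilpropo}, then build an $\ell$--fold composite out of maps that send each odd generator to a product of $3$ (resp.\ $5$) lower generators, grouped in consecutive blocks so that the labels at each level form an arithmetic progression and stay distinct. Your tree/cohomology bookkeeping is just a reformulation of the paper's explicit smash-product projections $\prod S_0^{2i+1}\to\prod S_0^{18i-3}\to\prod S_0^{162i-63}\to\cdots$, and your root-label formula reproduces the paper's constraint $2n-1\geq 3^{2\ell}+2\cdot 3^{\ell}$ exactly.
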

\begin{proof} 
Recall that $SU(n)$ is rationally equivalent to 
$$ S_0^3 \times \cdots \times S_0^{2n-1}.$$ By \fullref{nilpropo} it 
suffices to construct
a desired nontrivial
composition in ${\mathcal Z}^\infty(S_0^3 \times \cdots \times S_0^{2n 
-1})$.
To this end, we take the smash product for each
successive $3^k$ spheres in the product space. 
We already have dealt with such a map in \fullref{nilpropo}, that is $f_i$.
However here we need a more careful consideration about dimensions. Now 
we assume that $n$ is sufficiently large.   
We let 
$$ \wedge_{2i-1,2i+1,2i+3} \co S_0^{2i-1} \times S_0^{2i+1} \times 
S_0^{2i+3} \to S_0^{6i +3}.$$
denote the projection map to the smash product. Then we take the product 
of these maps
$$ \prod_{i=1}^{3^{\ell-1}} \wedge_{6i-3,6i-1,6i+1} \co 
\prod_{i=1}^{3^\ell}S_0^{2i+1} \to
\prod_{i=1}^{3^{\ell-1}}S_0^{18i - 3}.$$
We define a map $a_1 \co S_0^3 \times \cdots \times S_0^{2n-1} \to S_0^3 
\times \cdots \times S_0^{2n-1}$ to
be the following composition.
$$\prod_{i=1}^{n-1} S_0^{2i+1} \to 
\prod_{i=1}^{3^\ell}S_0^{2i+1} \xrightarrow{\Pi_{i=1}^{3^{\ell-1}}
\wedge_{6i-3,6i-1,6i+1} }
\prod_{i=1}^{3^{\ell-1}}S_0^{18i - 3} \to \prod_{i=1}^{n-1} S_0^{2i+1}$$ 
where the first map is the projection and the third map is the inclusion.

Similarly we construct $a_2$ as follows:
$$\prod_{i=1}^{n-1} S_0^{2i+1} \to 
\prod_{i=1}^{3^{\ell-1}}S_0^{18i-3} \xrightarrow{\Pi_{i=1}^{3^{\ell - 2}} 
\wedge_{54i-39,54i
-21, 54i-3} }
\prod_{i=1}^{3^{\ell - 2}}S_0^{162i - 63} \to \prod_{i=1}^{n - 1} 
S_0^{2i+1}.$$

We continue this process and finally we obtain $a_\ell$:
$$\prod_{i=1}^{n-1} S_0^{2i+1} \to 
\prod_{i=1}^{3}S_0^{3^{\ell - 1}((2i - 1)3^{\ell -1} + 2 )}
\to
S_0^{3^{2\ell} + 2\cdot 3^\ell} \to
\prod_{i=1}^{n-1} S_0^{2i+1}.$$ 
Clearly a map $a_\ell \circ a_{\ell -1} \cdots \circ a_2 \circ a_1$ 
induces a nontrivial
map on cohomology and moreover induces the trivial map on the homotopy 
groups. The construction
is possible if $2n -1 \geq 3^{2\ell} + 2 \cdot3^{\ell}$. Namely, if 
$n  \geq (3^{\ell} + 1)^2/2 $
then 
$$ t_\infty(SU(n)) > \ell.$$
This completes the claim for $SU(n)$. 
To prove the $Sp(n)$ case we can use the same methods for the $SU(n)$ 
case. For $Sp(n)$ this time
we consider the projection maps to the smash products from successive 
$5^k$ spheres instead 
of $3^k$ spheres
which were used in the proof of $SU(n)$. Then we obtain that if 
$n \geq   (2\cdot 5^{2\ell} + 5^\ell + 1)/4$,  
$$t_\infty(Sp(n)) > \ell.\proved$$
\end{proof}

\begin{rem} We can apply the similar arguments in the proof of the
above theorem to other classical groups. We obtain
$$t_\infty(U(n)) > \ell\hspace{0.2cm} \text{for}\hspace{0.2cm}  n  \geq 
(3^{\ell} + 1)^2/2,$$
and  
\begin{gather*}
t_\infty(G(m)) > \ell \text{ for} 
\begin{cases}
m  \geq (2\cdot 5^{2\ell} + 5^\ell + 3)/2 &  \text{if $m$ is odd}\\ 
m \geq (2\cdot 5^{2\ell} + 5^\ell + 5)/2 &  \text {if $m$ is even,}
\end{cases}
\end{gather*}
where $\ell$ is an integer, and $G(m) = SO(m), {\rm Spin}(m)$ or $O(m)$.     
\end{rem}

\bibliographystyle{gtart}
\bibliography{link}

\end{document}